\newtheorem{theorem}{Theorem}[section]
\newtheorem{lemma}[theorem]{Lemma}
\DeclareMathOperator{\Aff}{Aff}
\DeclareMathOperator{\Int}{int}
\numberwithin{equation}{section}
\begin{document}

\title{A short proof of  Gr\"unbaum's Conjecture about affine invariant points}

\author{Natalia Jonard-P\'erez}

\subjclass[2010]{52A20, 54B20, 54H15, 57N20, 57S20}

\address{Departamento de Matem\'aticas, Facultad de Ciencias,  Universidad Nacional Aut\'onoma de M\'exico. Circuito Exterior S/N, Cd. Universitaria, Colonia Copilco el Bajo, Delegaci\'on Coyoac\'an,  04510,  M\'exico D.F., M\'EXICO.}
\email{nat@ciencias.unam.mx}
\keywords{ Convex body, Gr\"nbaum's conjecture, affine invariant points, proper actions}

\thanks{The author has been partially supported by PAPIIT grant IA104816  (UNAM, M\'exico).
}

\maketitle\markboth{ NATALIA. JONARD-P\'EREZ }
{A short proof of Gr\"unbaum's Conjecture}

\begin{abstract}
Let us denote by $\mathcal K_n$ the hyperspace of all convex bodies of $\mathbb R^n$ equipped with the Hausdorff distance topology. An affine invariant point $p$ is a continuous and $\Aff(n)$-equivariant map $p:\mathcal K_n\to \mathbb R^n$, where $\Aff(n)$ denotes the group of all nonsingular affine maps of $\mathbb R^n$.
For every $K\in\mathcal K_n$, let $\mathfrak{P}_n(K)=\{p(K)\in\mathbb R^n\mid p\text{ is an affine invariant point}\}$ and $\mathfrak{F}_n(K)=\{x\in\mathbb R^n\mid gx=x\text{ for every }g\in\Aff(n)\text{ such that }gK=K\}$.
In 1963, B. Gr\"unbaum conjectured that $\mathfrak{P}_n(K)=\mathfrak{F}_n(K)$ \cite{Grunbaum}. After some partial results, the conjecture was recently proven in \cite{Olaf}. 

In this short note we give a rather different, simpler and shorter proof of this conjecture, based merely on the topology of the action of $\Aff(n)$ on $\mathcal K_n$.
\end{abstract}

\section{Introduction}
By a convex body $K\subset\mathbb R^n$ we mean a compact convex subset of $\mathbb R^n$ with a non empty interior. We denote by $\mathcal K_n$ the set of all convex bodies of $\mathbb R^n$ equipped with the Hausdorff distance
$$d_H(A,B)=\max\left\{ \sup\limits_{b\in B}d(b,A),~~~~~\sup\limits_{a\in A} d(a, B)\right\},$$
where $d$ is the standard Euclidean metric on $\Bbb R^n$. For any  $x\in \mathbb R^n$,  $K\in \mathcal K_n$ and $\varepsilon>0$ we will use the following notations:
$$B(x,\varepsilon)=\{y\in\mathbb R^n\mid d(x,y)<\varepsilon\},$$
$$O(K, \varepsilon)=\{A\in \mathcal{K}_n\mid d_H(K,A)<\varepsilon\}.$$

Let us denote by $\Aff(n)$ the group of all nonsingular affine maps of $\mathbb R^n$. Namely, $g\in \Aff(n)$ if and only if there exist an invertible linear map $\sigma:\mathbb R^n\to \mathbb R^n$ and a fixed point $v\in\mathbb R^n$ such that 
$$gx=v+\sigma x,\quad \text{for every }x\in\mathbb R^n.$$
Observe that each element $g\in\Aff(n)$ satisfies that
\begin{equation*}
g(tx+(1-t)y)=tgx+(1-t)gy,\quad \text{for every }x,y\in\mathbb R^n, \text{ and }t\in[0,1].
\end{equation*}

The natural action of $\Aff(n)$ on $\mathbb R^n$ induces a continuous action of $\Aff(n)$ on $\mathcal K_n$ by means of the formula
\begin{equation*}
(g, K)\longmapsto gK, \quad
gK=\{gx\mid x\in K\}.
\end{equation*}
We direct the reader to  \cite{NataliaFM} for more details about this action. 

In his famous paper \cite{Grunbaum}, B. Gr\"umbaum introduced the notion of an affine invariant point. Namely, an \textit{affine invariant point} is a continuous map $p:\mathcal K_n\to \mathbb R^n$ such that
$$gp(K)=pg(K),\text{ for every }g\in\Aff(n),\text{ and }K\in\mathcal K_n.$$
The centroid, the center of John's ellipsoid or the center of L\"owner's ellipsoid are examples of affine invariant points. 
If $p:\mathcal K_n\to \mathbb R^n$ is an affine invariant point and satisfies
$$p(A)\in \Int(A)\quad \text{ for every }A\in\mathcal K_n$$
then we call $p$ a \textit{proper affine invariant point}.
Using standard notation, let us denote by $\mathfrak{P}_n$ the set of all affine invariant points of $\mathcal K_n$. 
For every $K\in\mathcal K_n$, let $\mathfrak{P}_n(K)$ be the set of all $x\in\mathbb R^n$ such that $x=p(K)$ for some $p\in\mathfrak{P}_n$ and
$$\mathfrak{F}_n(K)=\{x\in\mathbb R^n\mid gx=x \text{ for every }g\in\Aff(n)\text{ such that }gK=K\}.$$
Observe that if  $x\in\mathfrak{P}_n(K)$, then there exists $p\in \mathfrak P_n$ with  $p(K)=x$. Therefore, if $g\in \Aff(n)$ satisfies $gK=K$ then we have that
$$x=p(K)=p(gK)=gp(K)=gx$$
and thus $x\in\mathfrak{F}_n(K)$. This implies that $\mathfrak{P}_n(K)\subset\mathfrak{F}_n(K)$. In \cite{Grunbaum} Gr\"unbaum conjectured that $\mathfrak{P}_n(K)=\mathfrak{F}_n(K)$, and in the following 50 years several partial results were obtained (see, e.g., \cite{Kuchment, Meyer}). Recently a full proof of this conjecture was finally given by O. Mordhorst in \cite{Olaf}.

The aim of this work is to provide an alternative proof of Gr\"umbaum's conjecture which is simpler, shorter and  different from the one given in \cite{Olaf}.
Our proof is based merely on some well-known topological results. What is interesting about this way of approaching Gr\"umbaum's conjecture (which was open for more than 50 years) is that once we translate the problem to the language of topolo\-gical transformation group's theory, the proof is quite natural and simple.

\section{Proper actions}

In order to make clear our proof, let us recall some basic notions about the theory of $G$-spaces. We refer the reader to the monographs \cite{Bredon} and \cite{Palais2} for a deeper understanding of this theory.

If $G$ is a topological group and $X$ is a $G$-space, for any $x\in X$ we denote by $G_x$ the \textit{stabilizer}  of $x$, i.e., $G_x=\{g\in G \mid gx=x\}$. For a subset $S\subset X$, the symbol $G(S)$ denotes the $G$-\textit{saturation} of $S$, i.e., $G(S)=\{gs\mid g\in G,\; s\in S\}.$ If $G(S)=S$ then we say that $S$ is a $G$-\textit{invariant} set (or, simply, an \textit{invariant} set). In particular, $G(x)$ denotes the $G$-\textit{orbit} of $x$, i.e., $G(x)=\{gx\in X\mid g\in G\}$. The set consisting of all orbits of $X$ equipped with the quotient topology is  denoted by $X/G$ and it is called the \textit{orbit space}. The quotient map $\pi:X\to X/G$ is called the \textit{orbit map} and it is always open.

For each subgroup $H\subset G$, the $H$-\textit{fixed point set} $X^H$ is the set $\{x\in X\mid H\subset G_x\}$. Clearly, $X^{H}$ is a closed subset of $X$.

A continuous map $f:X\to Y$ between two $G$-spaces is called \textit{equivariant} or a $G$-\textit{map} if
$f(gx)=g(fx)$ for every $x\in X$ and $g\in G$. On the other hand, if $f(gx)=f(x)$ for every $x\in X$, we call $f$ an \textit{invariant} (or a $G$-\textit{invariant} map).

 A $G$-space $X$ is called \textit{proper} (in the sense of Palais \cite{Palais}) if it has an open cover consisting of, so called,  small sets.
 A set  $S\subset X$ is called \textit{small} if  any  point $x\in X$ has  a neighborhood $V$  such that the set $\langle S, V\rangle=\{g\in G\mid gS\cap V\neq\emptyset\}$  has compact closure in $G$.

In the following theorem we sumarize some useful properties about proper actions. The proof of these and other related results can be consulted in \cite{Palais}

\begin{theorem}\label{t:properties} Let $X$ be a Tychonoff space. If $G$ is a locally compact group acting properly on $X$ then the following conditions hold:
\begin{enumerate}[\rm(1)]
\item For every $x\in X$, the orbit $G(x)$ is closed, the stabilizer $G_x$ is compact and $G(x)$ is $G$-homeomorphic to $G/G_x$.
\item The orbit space $X/G$ is Tychonoff.
\end{enumerate}
If additionally $G$ is a Lie group, then we also have that
\begin{enumerate}[\rm(3)]
\item Every orbit  $G(x)$ of $X$  is an equivariant retract of an invariant neighborhood of $G(x)$. Namely, for every $x\in X$ there exist an invariant neighborhood $U$ of $G(x)$ and a $G$-equivariant retraction $r:U\to G(x)$ . 

\end{enumerate}
\end{theorem}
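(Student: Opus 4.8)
The plan is to derive each of the three conclusions from the defining small-set covering property, with the Lie hypothesis entering only in the last and deepest part. Throughout I would exploit the auxiliary map $\theta\colon G\times X\to X\times X$, $\theta(g,x)=(gx,x)$, whose behaviour encodes properness: the guiding principle is that, for a locally compact group $G$ acting properly in Palais's sense on a Tychonoff space, $\theta$ is a closed map with compact fibres. Extracting this from the small-set cover is the technical engine; once it is available, parts (1) and (2) follow by point-set topology.

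For part (1), I would first note that each stabilizer $G_x$ is closed, being the preimage of the closed point $\{x\}$ under the continuous orbit map $g\mapsto gx$ (recall $X$ is Hausdorff). To see that $G_x$ is compact, choose a small open set $S\ni x$ from the cover and, applying smallness at the point $x$ itself, a neighbourhood $V$ of $x$ with $\langle S,V\rangle$ relatively compact. Every $g\in G_x$ satisfies $gx=x\in gS\cap V$, so $G_x\subset\langle S,V\rangle$; as a closed subset of a relatively compact set, $\overline{G_x}$ is compact and hence so is $G_x$. Closedness of the orbit $G(x)$ I would obtain from closedness of $\theta$: since $G\times\{x\}$ is closed in $G\times X$ and $\theta(G\times\{x\})=G(x)\times\{x\}$, the image is closed in $X\times X$, whence $G(x)$ is closed in $X$. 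Finally, the canonical continuous bijection $G/G_x\to G(x)$ is shown to be a homeomorphism by verifying it is a closed map, again a consequence of the properness of $\theta$ restricted to $G\times\{x\}$.

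For part (2), Hausdorffness of $X/G$ follows because the orbit relation $R=\theta(G\times X)$ is closed in $X\times X$ and the orbit map $\pi$ is open; a quotient by a closed equivalence relation with open projection is Hausdorff. Complete regularity is the more delicate half: here I would separate an orbit from a disjoint saturated closed set by a $G$-invariant continuous function on $X$, which then descends to $X/G$. Manufacturing such invariant functions is where properness is genuinely used, and for a general locally compact $G$ this is the step most likely to require care; one clean route proceeds through the local structure supplied by the next part.

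Part (3) is the genuine theorem and the expected main obstacle: it is Palais's Slice Theorem, and it is precisely here that the Lie hypothesis is indispensable. The plan is to produce a slice $S$ at $x$, namely a $G_x$-invariant set with $G_x(S)=S$ for which the natural map $G\times_{G_x}S\to X$ is an equivariant homeomorphism onto an invariant neighbourhood $U=G(S)$ of the orbit. The compactness of $G_x$ from part (1) is exactly what makes the construction feasible, via an averaging argument over the compact group $G_x$ (using its Haar measure) to build a $G_x$-equivariant local structure together with a $G_x$-equivariant retraction of $S$ onto $\{x\}$. Transporting this through the tube, the desired equivariant retraction is the composite $r\colon U\cong G\times_{G_x}S\to G\times_{G_x}\{x\}\cong G/G_x\cong G(x)$, that is $gs\mapsto gx$, which is well defined, $G$-equivariant, and restricts to the identity on $G(x)$. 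I expect the construction and verification of the slice, rather than the formal definition of $r$, to be the crux of the argument.
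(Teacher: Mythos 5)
The paper does not actually prove this theorem: it is stated as a summary of known facts and the proof is explicitly outsourced to Palais's paper \cite{Palais} (Propositions 1.1.4, 1.2.8 and the slice theorem 2.3.1--2.3.3 there). So the comparison here is really between your outline and Palais's original arguments, and on that score your plan has the right overall shape: the compactness of $G_x$ via $G_x\subset\langle S,V\rangle$ is exactly Palais's argument and is complete as you state it; the closedness of orbits, of the map $\theta(g,x)=(gx,x)$, and of the bijection $G/G_x\to G(x)$ all follow from the same net-and-relative-compactness extraction from the small-set definition, which you correctly identify as the technical engine even though you do not carry it out.

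Two genuine gaps remain. First, your route to complete regularity of $X/G$ ``through the local structure supplied by the next part'' is not available at the stated level of generality: part (2) is asserted for an arbitrary locally compact $G$, while part (3) requires $G$ to be a Lie group, so deferring (2) to (3) either restricts the theorem or is circular. Palais proves (2) directly for locally compact $G$ by averaging a Urysohn-type function $f$ against Haar measure, $F(y)=\int_G f(gy)\,dg$, where $f$ is supported in a small set so that for each $y$ only a relatively compact set of group elements contributes; this is the missing idea. Second, part (3) is the slice theorem, and your proposal correctly reduces the retraction to the existence of a tube $U\cong G\times_{G_x}S$ with $r(gs)=gx$, but the construction of the slice itself --- which uses a local cross-section of $G\to G/G_x$ (this is where the Lie hypothesis enters), an averaging over the compact group $G_x$, and an equivariant extension argument valid on a general Tychonoff $X$ --- is the entire content of the statement and is left as a black box. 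You acknowledge this, but as written the proposal establishes part (1) and defers the substance of parts (2) and (3).
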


Another well-known result that we will use is the following easy lemma. We include its proof for the sake of completeness.
\begin{lemma}\label{l:funcion de tychonoff} Let $X$ be a $G$-space, $x\in X$ an arbitrary point and $V\subset X$ a $G$-invariant neighborhood of the orbit $G(x)$.
\begin{enumerate}[\rm(1)]
\item If $X/G$ is regular, then we can find an invariant neighborhood $U$ of $G(x)$ such that
$$G(x)\subset U\subset\overline U\subset V.$$
\item If  $X/G$ is Tychonoff, then  we can find an invariant map 
$\lambda:X\to [0,1]$ such that $\lambda(y)=1$ and $\lambda (z)=0$ for every $y\in G(x)$ and $z\in X\setminus V$.
\end{enumerate}
\end{lemma}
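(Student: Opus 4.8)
The plan is to prove the two parts of Lemma \ref{l:funcion de tychonoff} by pushing everything down to the orbit space $X/G$ and using the separation axioms assumed there. The key structural fact is that the orbit map $\pi:X\to X/G$ is continuous, open, and surjective, and that $G$-invariant subsets of $X$ correspond exactly to saturated sets, i.e. preimages $\pi^{-1}(W)$ of subsets $W\subset X/G$. In particular, a $G$-invariant neighborhood $V$ of the orbit $G(x)$ is precisely $\pi^{-1}(\pi(V))$, and since $\pi$ is open, $\pi(V)$ is an open neighborhood of the point $\pi(x)\in X/G$. So the strategy is to translate each statement into a statement about the single point $\pi(x)$ and its open neighborhood $\pi(V)$ in $X/G$, invoke regularity or the Tychonoff property there, and then pull back along $\pi$.

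For part (1), I would first observe that $\pi(V)$ is an open neighborhood of $\pi(x)$ in $X/G$. Using regularity of $X/G$ at the point $\pi(x)$, I can find an open set $W\subset X/G$ with
\[
\pi(x)\in W\subset\overline{W}\subset\pi(V).
\]
Now set $U=\pi^{-1}(W)$. This is an open (since $\pi$ is continuous) invariant set containing $G(x)$. The only genuinely technical point is to verify that $\overline{U}\subset V$. Here I would use that $\overline{\pi^{-1}(W)}\subset\pi^{-1}(\overline{W})$: indeed, $\pi^{-1}(\overline{W})$ is closed and contains $\pi^{-1}(W)$, so it contains the closure. Combining this with $\overline{W}\subset\pi(V)$ and $\pi^{-1}(\pi(V))=V$ (by invariance of $V$) yields $\overline{U}\subset\pi^{-1}(\overline W)\subset\pi^{-1}(\pi(V))=V$, as desired.

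For part (2), the argument is analogous but uses the full Tychonoff (completely regular plus $T_1$, though here functional separation from the complement is what matters) property of $X/G$. Since $X\setminus V$ is invariant and closed, its image $\pi(X\setminus V)$ is a subset of $X/G$ not containing $\pi(x)$, and $(X/G)\setminus\pi(V)$ is a closed set separating $\pi(x)$ from the image of the complement. By the Tychonoff property applied to the point $\pi(x)$ and the closed set $(X/G)\setminus\pi(V)$, there is a continuous function $\mu:X/G\to[0,1]$ with $\mu(\pi(x))=1$ and $\mu\equiv 0$ on $(X/G)\setminus\pi(V)$. Then I would define $\lambda=\mu\circ\pi$. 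This $\lambda$ is continuous and automatically invariant because $\mu$ is a function of the orbit. Finally $\lambda\equiv 1$ on all of $G(x)$ (since $\mu(\pi(y))=\mu(\pi(x))=1$ for $y\in G(x)$), and $\lambda\equiv 0$ on $X\setminus V$ (since $\pi(X\setminus V)\subset (X/G)\setminus\pi(V)$, using again that $V$ is saturated).

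I do not expect any serious obstacle here; the lemma is essentially bookkeeping built on the openness and continuity of the orbit map. The one place requiring a little care is the inclusion $\overline{\pi^{-1}(W)}\subset\pi^{-1}(\overline{W})$ in part (1) and, dually, the correct handling of complements of saturated sets in part (2) — both rely on $V$ being genuinely $G$-invariant so that $\pi^{-1}(\pi(V))=V$ rather than merely containing $V$. Once that saturation identity is in hand, both parts follow directly from the chosen separation axiom on $X/G$.
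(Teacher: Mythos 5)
Your proposal is correct and follows essentially the same route as the paper: both arguments push the problem down to the orbit space via the open orbit map, apply regularity (resp.\ complete regularity) to the point $\pi(x)$ and the open set $\pi(V)$, and pull back using the saturation identity $\pi^{-1}(\pi(V))=V$. The inclusion $\overline{\pi^{-1}(W)}\subset\pi^{-1}(\overline{W})$ that you flag as the one technical point is handled identically in the paper.
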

\begin{proof}
(1)  Since the orbit map $\pi:X\to X/G$ is open, $\pi (V)$ is an open neighborhood of $\pi(x)$. If $X/G$ is regular, then we can find $ U_1\subset X/G$ an open neighborhood of $\pi(x)$ such that
$$\pi(x)\subset U_1\subset \overline{U_1}\subset \pi(V).$$
Thus 
$$ \pi^{-1}(\pi(x))=G(x)\subset \pi^{-1}(U_1)\subset \pi^{-1}(\overline {U_1})\subset \pi^{-1}(\pi(V))=V.$$
Let us call $U:= \pi^{-1}(U_1)$. Obviously $U$ is an open neighborhood of $G(x)$ and $\overline U\subset V$. Therefore $U$ is the desired neighborhood. 

(2). Now let us assume that $X/G$ is Tychonoff. Since $\pi(V)$ is an open neighborhood of $\pi(x)$ we can find a map $\widetilde \lambda:X/G\to[0,1]$ such that $\widetilde\lambda (\pi(x))=1$ and $\widetilde \lambda(\pi(z))=0$ for every $\pi(z)\in (X/G)\setminus  \pi(V)$. Now the desired map $\lambda:X\to [0,1]$ can be obviously defined by $\lambda=\widetilde\lambda\circ \pi. $
\end{proof}

Our interest in proper actions lies in the fact that the action of  $\Aff(n)$ on $\mathcal K_n$ is proper (\cite[Theorem 3.3]{NataliaFM}). Furthermore, since $\Aff(n)$ is a locally compact Lie group,  $\mathcal K_n$ satisfies all the implications of Theorem~\ref{t:properties}.  In particular the orbit space $\mathcal K_n/\Aff(n)$ is Tychonoff (in fact it is a compact metric space known as the non-symmetric Banach Mazur compactum) and then $\mathcal K_n$ satisfies all conditions of Lemma~\ref{l:funcion de tychonoff}.

\section{Gr\"unbaum's conjecture}

In order to simplify the notation, in what follows $G$ will always denotes the group $\Aff(n)$. 
Let $K\in\mathcal K_n$ be a convex body. Observe that in this case the set $\mathfrak{F}_n(K)$  coincides with $(\mathbb R^n)^{G_K}$, the set of all $G_K$-fixed points of $\mathbb R^n$, where $G_K$ denotes the stabilizer of $K$. Namely, a point $x\in\mathbb R^n$ belongs to $\mathfrak{F}_n(K)$  if and only if $G_K\subset G_x$.
On the other hand, the set $\mathfrak{P}_n$ is just the set of all $G$-equivariant  maps $p:\mathcal K_n\to\mathbb R^n$. 

If we translate Gr\"unbaum's conjecture to the language of topological transformation groups, we obtain  the following scenario: given a convex body $K\in \mathcal K_n$ and $x$ with $G_K\subset G_x$ we need to find a $G$-equivariant map $p:\mathcal K_n\to \mathbb R^n$ such that $p(K)=x$. 
We will construct such a map in the following theorem.

\begin{theorem}\label{t:main}
Let $K\in\mathcal K_n$. For every $x\in\mathfrak{F}_n(K) $  we can find an affine invariant point $p\in \mathfrak{P}_n$ such that 
$p(K)=x$.
\end{theorem}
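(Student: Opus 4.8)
The plan is to construct the affine invariant point $p$ by starting from the constant-valued choice $p(K)=x$ on the orbit $G(K)$ and extending it equivariantly to all of $\mathcal K_n$. The first observation is that $p$ is forced on the orbit $G(K)$: since we want $p(gK)=gp(K)=gx$, the only possible equivariant definition on $G(K)$ is $p_0(gK):=gx$. The crucial point is that this is well-defined precisely because of the hypothesis $x\in\mathfrak F_n(K)$, i.e. $G_K\subset G_x$: if $gK=hK$ then $h^{-1}g\in G_K\subset G_x$, so $h^{-1}gx=x$, whence $gx=hx$. Thus $p_0:G(K)\to\mathbb R^n$ is a well-defined $G$-equivariant map on the orbit; one checks it is continuous using that $G(K)$ is $G$-homeomorphic to $G/G_K$ by Theorem~\ref{t:properties}(1).

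Next I would use the properness of the action together with Theorem~\ref{t:properties}(3) to get room around the orbit. Applying part (3) to the point $K$, there is an invariant neighborhood $U$ of $G(K)$ and a $G$-equivariant retraction $r:U\to G(K)$. Composing gives an equivariant map $p_0\circ r:U\to\mathbb R^n$ that already extends $p_0$ over the whole invariant neighborhood $U$ and satisfies $(p_0\circ r)(K)=x$. So on $U$ we have exactly the map we want; the remaining issue is purely to push this definition out to the rest of $\mathcal K_n$ without destroying equivariance and without changing its value at $K$.

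For the global extension I would use a $G$-invariant cutoff function. Since the action is proper and $\Aff(n)$ is a locally compact Lie group, the orbit space $\mathcal K_n/G$ is Tychonoff, so Lemma~\ref{l:funcion de tychonoff}(2) applies to the invariant neighborhood $V:=U$ of $G(K)$: there is an invariant map $\lambda:\mathcal K_n\to[0,1]$ with $\lambda\equiv1$ on $G(K)$ and $\lambda\equiv0$ off $U$. The idea is then to multiply. A clean way to avoid any difficulty with the ``origin'' is to fix any affine invariant point already known to exist (for instance the centroid $c\in\mathfrak P_n$) and set
\begin{equation*}
p(A)=\lambda(A)\,(p_0\circ r)(A)+\bigl(1-\lambda(A)\bigr)\,c(A)\quad\text{for }A\in U,
\end{equation*}
and $p(A)=c(A)$ for $A\notin U$. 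Since $\lambda$ is invariant, $\lambda(gA)=\lambda(A)$, and since $p_0\circ r$ and $c$ are both equivariant and affine maps commute with convex combinations, $p$ is $G$-equivariant; because $\lambda$ vanishes outside $U$ the two formulas agree on the overlap, giving continuity on all of $\mathcal K_n$. Finally $\lambda(K)=1$ forces $p(K)=(p_0\circ r)(K)=x$, as required.

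I expect the main obstacle to be justifying that the convex-combination formula really defines a \emph{global} continuous equivariant map rather than merely a local one. The subtlety is matching the two pieces along $\partial U$: one must check that as $A$ approaches the complement of $U$ the coefficient $\lambda(A)$ tends to $0$ so that the first summand is suppressed, which is exactly what the support condition on $\lambda$ from Lemma~\ref{l:funcion de tychonoff}(2) provides, possibly after shrinking $U$ via Lemma~\ref{l:funcion de tychonoff}(1) so that $\overline{\{\lambda>0\}}$ sits inside the set where $p_0\circ r$ is defined. Equivariance is then automatic from the invariance of $\lambda$ and the equivariance of the two ingredient maps, and evaluating at $K$ is immediate. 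The only genuinely nontrivial input is the retraction in Theorem~\ref{t:properties}(3), which is where the Lie-group hypothesis and properness of the action are really used.
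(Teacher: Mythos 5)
Your proposal is correct and follows essentially the same route as the paper: the map you call $p_0$ is the paper's $f$, the retraction $r$ and the invariant cutoff $\lambda$ are used identically, and the convex combination with the centroid is exactly the paper's formula. The subtlety you flag at the end (shrinking to an invariant $V$ with $\overline V\subset U$ so that the cutoff is supported where $f\circ r$ is defined) is precisely the step the paper handles via Lemma~\ref{l:funcion de tychonoff}(1), so your fix matches the published argument.
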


\begin{proof}

Let us define $f:G(K)\to G(x)\subset \mathbb R^n$ as
\begin{equation}\label{e:funcion f}
f(gK)=gx.
\end{equation}

Since $x\in\mathfrak{F}_n(K) $  we have that  $G_K\subset G_x$ and therefore the map $f$ is well defined and continuous. By Theorem \ref{t:properties}, there exist an open invariant neighborhood $U$ of $G(K)$ and a $G$-equivariant retraction  $r:U\to G(K)$. 
By Lemma~\ref{l:funcion de tychonoff}, we can find  $V\subset \mathcal K_n$ another invariant neighborhood of $G(K)$ such that
$$G(K)\subset V\subset \overline V\subset U.$$
Using Lemma~\ref{l:funcion de tychonoff} again, we can also pick a  $G$-invariant map $\lambda:\mathcal K_n\to [0,1]$ such that $\lambda (gK)=1$ and $\lambda (A)=0$ for every $g\in G$ and $A\in \mathcal{K}_n\setminus V$.
Now let us consider $\varphi:\mathcal K_n\to \mathbb R^n$ any affine invariant point (for example, the centroid).
Finally we define $p:\mathcal K_n\to \mathbb R^n$ by the following rule:
\begin{equation}
p(A)=\begin{cases}
\varphi(A),&\text{if }A\in\mathcal K_n\setminus \overline{V},\\
\lambda(A)f( r(A))+(1-\lambda (A))\varphi(A),&\text{if }A\in U.
\end{cases}
\end{equation}
If $A\in U\cap \big( \mathcal K_n\setminus\overline{ V}\big)$, then $\lambda(A)=0$ and therefore 
$$p(A)=\lambda(A)f( r(A))+(1-\lambda (A))\varphi(A)=\varphi(A).$$ In the first place, $p$  is well defined and continuous (since it is continuous in each of the open sets $\mathcal K_n\setminus \overline{V}$ and $U$ where it is defined). Secondly, it is obvious that $p(K)=x$, so it only rests to show that $p$ is $G$-equivariant. Indeed, if $A\in\mathcal K_n\setminus \overline{V}$, we can use the fact that $\varphi$ is $G$-equivariant and get that 
$p(gA)=\varphi(gA)=g\varphi(A)=gp(A)$ for every $g\in G$.
On the other hand, if $A\in U$ and $g\in G$, then we can use the fact that $\lambda$ is $G$-invariant, $r$ and $f$ are $G$-equivariant and $g$ is an affine map, to infer that
\begin{align*}
p(gA)=&\lambda(gA)f( r(gA))+(1-\lambda (gA))\varphi(gA)\\
&=\lambda(A)gf( r(A))+(1-\lambda (A))g\varphi(A)\\
&=g\big (\lambda(A)f( r(A))+(1-\lambda (A))\varphi(A)\big).
\end{align*}
This equality completes the proof.
\end{proof}

If we are interested in proper affine invariant points, we can slightly modify the previous proof to get the following

\begin{theorem}
Let $K\in\mathcal K_n$ and $x\in \mathfrak{F}_n(K)$. If $x\in \Int(K)$, then we can find a proper affine invariant point $p\in \mathfrak{P}_n$ such that 
$p(K)=x$.
\end{theorem}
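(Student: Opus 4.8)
The plan is to reuse the construction of Theorem~\ref{t:main} almost verbatim, making only two adjustments that force $p(A)\in\Int(A)$ for every $A\in\mathcal K_n$. First I would take the auxiliary affine invariant point $\varphi$ to be the centroid, which is itself proper, so that $\varphi(A)\in\Int(A)$ for all $A$. The point of this choice is that, on the region where $p$ is not simply $\varphi$, the value $p(A)=\lambda(A)f(r(A))+(1-\lambda(A))\varphi(A)$ is a convex combination of two points; since $\Int(A)$ is convex, it then suffices to arrange that $f(r(A))$ also lies in $\Int(A)$ whenever $\lambda(A)\neq 0$.

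To this end I would shrink the neighbourhood on which the interpolation occurs. Consider
$$W=\{A\in U\mid f(r(A))\in\Int(A)\}.$$
I claim $W$ is an invariant open neighbourhood of the orbit $G(K)$. Openness is the one point requiring a small argument: the set $\{(A,y)\in\mathcal K_n\times\mathbb R^n\mid y\in\Int(A)\}$ is open (for instance because the ``inradius at $y$'' $\rho(A,y)=d(y,\mathbb R^n\setminus A)$ is jointly continuous and $y\in\Int(A)$ iff $\rho(A,y)>0$), and $A\mapsto(A,f(r(A)))$ is continuous, so $W$ is the preimage of an open set. Invariance holds because $f$ and $r$ are $G$-equivariant and $g$ carries $\Int(A)$ onto $\Int(gA)$: if $f(r(A))\in\Int(A)$ then $f(r(gA))=gf(r(A))\in g\Int(A)=\Int(gA)$. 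Finally $G(K)\subset W$, since for $A=gK$ the retraction gives $r(gK)=gK$, whence $f(r(gK))=gx\in g\Int(K)=\Int(gK)$, using the hypothesis $x\in\Int(K)$.

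With $W$ available I would run the proof of Theorem~\ref{t:main} word for word, only replacing $U$ by the smaller invariant open neighbourhood $U\cap W$: pick $V$ with $G(K)\subset V\subset\overline V\subset U\cap W$ and a $G$-invariant map $\lambda\colon\mathcal K_n\to[0,1]$ with $\lambda\equiv1$ on $G(K)$ and $\lambda\equiv0$ off $V$, and define $p$ by the same two-case formula. Continuity, $G$-equivariance and $p(K)=x$ follow exactly as in Theorem~\ref{t:main}. For properness, on $\mathcal K_n\setminus\overline V$ we have $p(A)=\varphi(A)\in\Int(A)$, while on $U\cap W$ both $f(r(A))$ and $\varphi(A)$ lie in the convex set $\Int(A)$, so their convex combination $p(A)$ does too. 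Hence $p$ is a proper affine invariant point with $p(K)=x$.

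The only genuine obstacle I anticipate is the openness of $W$, i.e.\ that interior membership is an open condition on $\mathcal K_n\times\mathbb R^n$; once that is in place, the rest is a direct transcription of the earlier construction together with the convexity of $\Int(A)$.
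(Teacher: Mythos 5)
Your proof is correct and follows essentially the same strategy as the paper's: choose a proper $\varphi$, shrink $U$ to an invariant open neighbourhood on which $f(r(A))\in\Int(A)$, and finish by taking a convex combination inside the convex set $\Int(A)$. The only difference is technical: the paper obtains that neighbourhood as the $G$-saturation of a small Hausdorff ball $O(K,\delta)$ (via Lemma 3.1 of \cite{NataliaFM} and continuity of $f\circ\widetilde r$), whereas you take the full set $W$ of bodies satisfying the condition and check directly that it is open and invariant --- both are valid.
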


\begin{proof}
Let $f:G(K)\to G(x)$ be defined as in (\ref{e:funcion f}). 
Since $x\in \Int (K)$, for every $g\in G$ we have that,
\begin{equation*}
gx\in g\Int(K)=\Int(gK).
\end{equation*}
Using  Theorem~\ref{t:properties}, we can find a $G$-invariant neighborhood $\widetilde U$ of $G(K)$ and an equivariant retraction $\widetilde r:\widetilde U\to G(K)$.

 By \cite[Lemma 3.1]{NataliaFM} we can  find $\varepsilon>0$  such that $B(x,\varepsilon)\subset \Int (A)$ for every $A\in O(K, \varepsilon)$ and $O(K,\varepsilon)\subset \widetilde U$. Since $f\circ \widetilde{r}$ is continuous we can also find $0<\delta\leq\varepsilon$ such that if $A\in O( K, \delta)$, then 
 \begin{equation}\label{e:contencion interior}
 f(\widetilde r(A))\in B(x, \varepsilon)\subset \Int (A).
 \end{equation}

 Now let us define $$U:=G\big(O(K,\delta )\big)=\{gA\mid A\in O(K,\delta)\}.$$ 
%
%
Clearly $U$ is an open invariant neighborhood of $G(K)$ such that $U\subset \widetilde U$.
Let $r$ be the restriction of $\widetilde r$ to $U$. 
Observe that for every $A\in U$, there exists $g\in G$ and $A'\in O(K,\delta )$ with $gA'=A$. Therefore, by (\ref{e:contencion interior}), 
\begin{equation*}
f(r(A))=f(\widetilde r(gA'))=gf(\widetilde r(A'))\in g\Int (A')=\Int (gA')=\Int(A).
\end{equation*}

Now we continue the proof verbatim as in the proof of Theorem~\ref{t:main}, taking as the map $\varphi $ any proper affine invariant point (for example, the centroid). 
To finish the proof we need to prove that the obtained map $p$ is proper.
Indeed, if $A\in\mathcal K_n\setminus \overline V$ then  $p(A)=\varphi(A)\in \Int (A)$, because $\varphi$ is proper.
On the other hand, if $A\in U$, 
 observe that 
\begin{align*}
p(A)=\lambda (A)f(r(A))+(1-\lambda(A))\varphi(A)
\end{align*}
is a convex combination of the points $f(r(A))$ and $\varphi(A)$ that lie in the convex set $\Int(A)$. Thus $p(A)\in \Int(A)$, as desired.

\end{proof}

\end{document}